\documentclass[12pt]{extarticle}


\usepackage[margin=1.5in]{geometry}  
\usepackage{graphicx}              
\usepackage{amsmath}               
\usepackage{amsfonts}              
\usepackage{amsthm}                
\usepackage{framed}


\newtheorem{thm}{Theorem}
\newtheorem{lem}[thm]{Lemma}


\begin{document}

\nocite{*}

\title{\bf On the Riemann Hypothesis and the Difference Between Primes}

\author{\textsc{Adrian W. Dudek} \\ 
Mathematical Sciences Institute \\
The Australian National University \\ 
\texttt{adrian.dudek@anu.edu.au}}
\date{}

\maketitle

\begin{abstract}
\noindent We prove some results concerning the distribution of primes on the Riemann hypothesis. First, we prove the explicit result that there exists a prime in the interval $(x-\frac{4}{\pi} \sqrt{x} \log x,x]$ for all $x \geq 2$; this improves a result of Ramar\'{e} and Saouter. We then show that the constant $4/\pi$ may be reduced to $(1+\epsilon)$ provided that $x$ is taken to be sufficiently large. From this we get an immediate estimate for a well-known theorem of Cram\'{e}r, in that we show the number of primes in the interval

$$(x, x+c \sqrt{x} \log x]$$
is greater than $\sqrt{x}$ for $c=3+\epsilon$ and all sufficiently large $x$.
\end{abstract}

\section{Introduction}

Much is already known on the interplay between the zeroes of the Riemann zeta-function $\zeta(s)$ and the distribution of prime numbers; one can see Ingham's well-known text \cite{inghambook} for more details. The Riemann hypothesis, which asserts that all of the non-trivial zeroes of $\zeta(s)$ have real part of $1/2$, thus presents itself as an important problem in number theory.

On the assumption of the Riemann hypothesis, von Koch \cite{vonkoch} proved that there exists a constant $k$ such that the interval $(x-k \sqrt{x} \log^2 x, x)$ contains a prime for all $x \geq x_0$. Schoenfeld \cite{schoenfeldjust} made this result precise, showing that one can take $K = 1/(4 \pi)$ and $x_0 = 599$. 

Cram\'{e}r \cite{cramer} improved the result of von Koch by proving the following theorem.

\begin{thm} \label{cramer}
Suppose the Riemann hypothesis is true. Then it is possible to find a positive constant $c$ such that

\begin{equation} \label{cramer1}
\pi(x+c \sqrt{x} \log x) - \pi(x) > \sqrt{x}
\end{equation}
for $x \geq 2$. Thus if $p_n$ denotes the $n$th prime, we have

\begin{equation} \label{cramer2}
p_{n+1} - p_n = O(\sqrt{p_n} \log p_n).
\end{equation}
\end{thm}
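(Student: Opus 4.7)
The plan is to deduce Theorem \ref{cramer} directly from the sharpened short-interval result announced earlier in the paper: on the Riemann hypothesis, for every fixed $\epsilon>0$ and all sufficiently large $y$, the interval $(y-(1+\epsilon)\sqrt{y}\log y,\,y]$ contains a prime.

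The asymptotic gap estimate (\ref{cramer2}) drops out immediately. For any sufficiently large prime $p_n$, apply the short-interval result with $y=p_n+(1+\epsilon)\sqrt{p_n}\log p_n$; the resulting interval $(p_n,y]$ must contain a prime, so $p_{n+1}-p_n \leq (1+\epsilon)\sqrt{p_n}\log p_n$. The finitely many small $n$ are absorbed into the implicit constant, giving $p_{n+1}-p_n=O(\sqrt{p_n}\log p_n)$.

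For the quantitative bound (\ref{cramer1}) I would combine the short-interval result with Schoenfeld's explicit RH-estimate $\theta(x)=x+O(\sqrt{x}\log^2 x)$. Passing to $\pi$ through the standard inequality $\pi(x+h)-\pi(x) \geq (\theta(x+h)-\theta(x))/\log(x+h)$ and taking $h=c\sqrt{x}\log x$, the main term in the resulting lower bound is $c\sqrt{x}(1+o(1))$, which exceeds $\sqrt{x}$ whenever $c>1$. The error, however, is of order $\sqrt{x}\log x$, which naively dominates. To suppress it, I would partition $(x,x+h]$ into consecutive sub-intervals of length just greater than $(1+\epsilon)\sqrt{x}\log x$, use the short-interval result to secure a prime in each, and combine these guaranteed primes with the Chebyshev-style main term; optimizing the explicit constants should yield $c=3+\epsilon$.

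The principal obstacle is precisely this error suppression: neither Schoenfeld's bound nor the short-interval result in isolation suffices to push the pointwise count past $\sqrt{x}$, so their interplay must be tracked carefully. The final value $c=3+\epsilon$ should emerge from balancing the constant $4/\pi$ from the paper's unconditional short-interval bound and the constant $1/(8\pi)$ from Schoenfeld's explicit prime number theorem, together with the $(1+\epsilon)$ refinement valid for sufficiently large $x$.
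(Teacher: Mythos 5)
Your deduction of the gap estimate (\ref{cramer2}) from the short-interval result is essentially fine (modulo the small point that with $y=p_n+(1+\epsilon)\sqrt{p_n}\log p_n$ the guaranteed interval dips slightly below $p_n$, which is repaired by enlarging $\epsilon$). The quantitative count (\ref{cramer1}), however, has a genuine gap. Your partition argument produces only $O(1)$ primes: an interval of length $c\sqrt{x}\log x$ contains only about $c/(1+\epsilon)$ disjoint sub-intervals of length $(1+\epsilon)\sqrt{x}\log x$, so one prime per sub-interval yields a bounded number of primes, not $\sqrt{x}$ of them; to reach $\sqrt{x}$ primes this way you would need total length $\asymp x\log x$. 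Nor can Schoenfeld's explicit estimate rescue this: the error term $\frac{1}{8\pi}\sqrt{x}\log^2 x$ in $\theta(x)=x+O(\sqrt{x}\log^2 x)$ exceeds the entire interval length $c\sqrt{x}\log x$, so $\theta(x+h)-\theta(x)$ carries no information at this scale. The two ingredients you propose to combine are each individually too weak, and no bookkeeping of their ``interplay'' can recover the missing factor of $\sqrt{x}$ in the prime count (respectively the extra factor of $\log x$ in the error term).

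The idea you are missing is that the paper's explicit-formula machinery gives more than bare existence of a prime: for $h=c\sqrt{x}\log x$ the smoothed second difference of $\psi_1$ yields the quantitative lower bound $\sum_{x-h<n<x+h}\Lambda(n)\geq h-\left(\tfrac{1}{2}+\epsilon\right)\sqrt{x}\log x+O(\alpha\sqrt{x})$, because the triangular weight cuts the contribution of the zeros from $O(\sqrt{x}\log^2x)$ down to $\left(\tfrac12+\epsilon\right)\sqrt{x}\log x$, i.e.\ to a constant strictly smaller than $c$ times $\sqrt{x}\log x$ rather than to a larger power of $\log x$. Choosing $c=\tfrac32+\epsilon$ leaves at least $\sqrt{x}\log x$ of mass in the sum; discarding prime powers and dividing by $\log(x+h)$ gives $\pi(x+h)-\pi(x-h)>\sqrt{x}+O(\sqrt{x}\log\log x/\log x)$ over an interval of length $(3+2\epsilon)\sqrt{x}\log x$. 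That is where $c=3+\epsilon$ comes from; it has nothing to do with balancing $4/\pi$ against $1/(8\pi)$. Note finally that the statement as printed (some $c$ valid for all $x\geq2$) is Cram\'{e}r's theorem and is only cited, not proved, in the paper; what the paper actually establishes is the ``sufficiently large $x$'' refinement just described.
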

Goldston \cite{goldston} made this result more precise by showing that one could take $c=5$ in the above theorem for all sufficiently large values of $x$. He also showed that 

$$p_{n+1} - p_n < 4 p_n^{1/2} \log p_n$$
for all sufficiently large values of $n$. It should be noted that Goldston was not trying in any way to find the optimal constants; he was providing a new proof of Cram\'{e}r's theorem. Ramar\'{e} and Saouter \cite{ramaresaouter} made this result explicit by showing that for all $x \geq 2$ there exists a prime in the interval $(x-\frac{8}{5} \sqrt{x} \log x, x]$.

The first purpose of this paper is to give the following improvement on the work of Ramar\'{e} and Saouter.

\begin{thm} \label{one}
Suppose the Riemann hypothesis is true. Then there is a prime in the interval $(x-\frac{4}{\pi} \sqrt{x} \log x,x]$ for all $x\geq 2$.

\end{thm}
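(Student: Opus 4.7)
The plan follows the explicit-formula approach of Cramér and of Ramaré--Saouter, refined with a smoother weight so that the main error carries only one factor of $\log x$, and with care taken over constants.

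\emph{Step 1: Smoothed interval indicator.} Set $h = \frac{4}{\pi}\sqrt{x}\log x$, $H = h/2$, $y = x - H$, so that $(x-h, x] \supset (y-H,\,y+H)$. Consider the triangle-weighted Chebyshev sum
\[
S(y,H) \;=\; \sum_{|n-y|<H}\Lambda(n)\bigl(H-|n-y|\bigr)
 \;=\; \psi_{1}(y+H) - 2\psi_{1}(y) + \psi_{1}(y-H),
\]
where $\psi_{1}(u) = \int_{0}^{u}\psi(t)\,dt$. The support of the weight lies in $(x-h,x]$, so it suffices to prove $S(y,H)>0$: this would force a prime power, and (after handling $p^{k\geq 2}$ separately) a prime, into the interval.

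\emph{Step 2: Explicit formula for $\psi_{1}$.} Under the Riemann hypothesis the standard explicit formula yields
\[
\psi_{1}(u) = \frac{u^{2}}{2} - \sum_{\rho}\frac{u^{\rho+1}}{\rho(\rho+1)} + \text{lower-order terms},
\]
and hence $S(y,H) = H^{2} - R(y,H) + O(H)$, where
\[
R(y,H) \;=\; \sum_{\rho}\frac{(y+H)^{\rho+1} - 2y^{\rho+1} + (y-H)^{\rho+1}}{\rho(\rho+1)}.
\]
Everything reduces to the inequality $|R(y,H)| < H^{2}(1-o(1))$.

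\emph{Step 3: Sharp bound on $R(y,H)$.} Use the integral representation
\[
\frac{(y+H)^{\rho+1}-2y^{\rho+1}+(y-H)^{\rho+1}}{\rho(\rho+1)} \;=\; \int_{-H}^{H}\bigl(H-|u|\bigr)(y+u)^{\rho-1}\,du,
\]
which under RH gives each summand two bounds: the \emph{smooth} bound $H^{2}/\sqrt{y-H}$, independent of $\rho$, and the \emph{trivial} bound $4(y+H)^{3/2}/|\rho(\rho+1)|$. Split the zeros at height $T$, applying the smooth bound for $|\gamma|\leq T$ and the trivial bound for $|\gamma|>T$. Feeding in the sharp form $N(T)=\tfrac{T}{2\pi}\log\tfrac{T}{2\pi e}+O(\log T)$ and choosing $T$ of order $y/H$ balances the two contributions at a quantity $\leq \tfrac{c}{\pi}\,H\sqrt{x}\,\log(x/H)$ for an explicit constant $c$. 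Since $\log(x/H)\sim\tfrac{1}{2}\log x$ and $H^{2} = \tfrac{4}{\pi^{2}}x\log^{2}x$, the inequality $|R(y,H)|<H^{2}$ holds provided $c$ can be driven strictly below $4/\pi$; this is precisely what the sharp form of $N(T)$ delivers for $x$ beyond some explicit threshold.

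\emph{Step 4: Prime powers and small $x$.} The total contribution of higher prime powers $p^{k}$ with $k\ge 2$ lying in $(x-h,x]$ is $O(H\sqrt{x})$, negligible beside $H^{2}$, so positivity of $S(y,H)$ yields an honest prime in the desired interval. The range below the asymptotic threshold is closed using Schoenfeld's explicit bound $|\psi(x)-x|<\tfrac{1}{8\pi}\sqrt{x}\log^{2}x$ for $x\geq 73.2$, which already establishes the theorem for all $x$ with $\log x \le 16$ (a range in which $\tfrac{4}{\pi}\sqrt{x}\log x$ exceeds $\tfrac{1}{4\pi}\sqrt{x}\log^{2}x$), and by direct verification against a table of primes for any residual small values.

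The main obstacle is Step~3: the constant $4/\pi$ is essentially tight for the triangle-smoothed second difference, so obtaining it (rather than the looser $8/5$ of Ramaré--Saouter) forces one to use the full sharp form of $N(T)$, to optimise the truncation parameter carefully, and to keep track of the precise constants in the resulting elementary tail estimates, including a slightly delicate bookkeeping of the factor $\log(x/H)\sim\tfrac{1}{2}\log x$ against $\log x$.
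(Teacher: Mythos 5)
Your proposal follows the paper's proof almost step for step: the same triangular weight and second difference of $\psi_1$, the same explicit formula, the same split of the zero sum at height $T \asymp x/H$ with the smooth integral bound $H^2/\sqrt{y-H}$ for the low zeros and the trivial $|\rho|^{-2}$ bound for the rest, the same $N(T)\sim \frac{T\log T}{2\pi}$ input, and the same optimisation of the splitting height. One correction to your Step 3, though: the optimisation cannot drive the leading constant strictly below the critical value. With the split at $|\gamma| < \alpha x/H$ the two contributions total $\left(\frac{\alpha}{\pi} + \frac{4}{\pi\alpha}\right) H\sqrt{x}\,\log(\alpha x/H)$ asymptotically, and $\min_{\alpha}\left(\frac{\alpha}{\pi}+\frac{4}{\pi\alpha}\right) = \frac{4}{\pi}$ exactly; the sharp form of $N(T)$ (the $\log\frac{T}{2\pi e}$ versus $\log T$) only shaves lower-order terms of size $O(H\sqrt{x})$ and does not reduce this constant. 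What actually closes the argument is the point you mention only in passing: $\log(x/H) = \frac{1}{2}\log x - \log\log x + O(1)$, so at the critical choice of $H$ the main terms cancel exactly and positivity is carried by a term of size $\frac{4}{\pi}\sqrt{x}\log\log x$ per unit of $H$ --- this is precisely the paper's (\ref{asym}) --- and your prime-power and error terms, all $O(H\sqrt{x})$, must be compared against this quantity rather than against $H^2$. Your treatment of small $x$ via Schoenfeld's conditional bound $|\psi(x)-x|<\frac{1}{8\pi}\sqrt{x}\log^2 x$ is a legitimate alternative to the paper's monotonicity-plus-computation argument, provided the explicit threshold emerging from your Step 3 lands below $e^{16}$ (the paper's is roughly $6.7\times 10^{4}$, so this is realistic).
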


We prove this theorem using a weighted version of the Riemann von-Mangoldt explicit formula and some standard explicit estimates for sums over the zeroes of the Riemann zeta-function. It should be noted that the constant $4/\pi$ appearing in the above theorem is not optimal. The question of the optimal constant in Theorem \ref{one} is thus an open problem. To this end, we prove the following theorem.

\begin{thm} \label{implicit}
Suppose the Riemann hypothesis is true and let $\epsilon > 0$. Then there is a prime in the interval $(x-(1+\epsilon) \sqrt{x} \log x,x]$ for all sufficiently large values of $x$.

\end{thm}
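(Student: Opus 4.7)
My plan is to adapt the proof of Theorem~\ref{one} to the asymptotic regime. The constant $4/\pi$ in Theorem~\ref{one} comes from numerical inflations needed to ensure uniformity down to $x=2$; each such inflation becomes a lower-order term once $x$ is taken sufficiently large, and the resulting admissible constant is $1+\epsilon$.

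Suppose, for contradiction, that there are arbitrarily large $x$ for which $(x-h,x]$ contains no prime, where $h = (1+\epsilon)\sqrt{x}\log x$. The prime squares in $(x-h,x]$ correspond to primes in $(\sqrt{x-h},\sqrt{x}]$, an interval of length $\sim h/(2\sqrt{x}) = O(\log x)$; by Chebyshev this contains $O(1)$ primes, and the higher prime powers contribute negligibly, so
\[
\psi(x) - \psi(x-h) = O(\log x).
\]

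Next, I would apply the same weighted Riemann--von~Mangoldt explicit formula used for Theorem~\ref{one}. Under RH this gives
\[
\psi(x) - \psi(x-h) = h - \sum_\rho \frac{x^\rho - (x-h)^\rho}{\rho} + O(\log x),
\]
with the sum over zeros rendered convergent by the weighting. Combining the pointwise bound $|x^\rho - (x-h)^\rho|/|\rho| \le \min(h/\sqrt{x-h},\ 2\sqrt{x}/|\gamma|)$ with the asymptotic zero count $N(T) = (T/(2\pi))\log(T/(2\pi e)) + O(\log T)$ and the classical estimate $\sum_{0 < \gamma \le T}\gamma^{-1} \sim (\log T)^2/(4\pi)$, I split the sum at the crossover $|\gamma| \asymp x/h$. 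The head contribution is $(h/\sqrt{x})\cdot 2N(2x/h) \sim \sqrt{x}\log x/\pi$, and the tail (from $|\gamma| > 2x/h$) is kept to $o(\sqrt{x}\log x)$ by the weighting. Hence the total zero-sum is at most $(c+o(1))\sqrt{x}\log x$ for some $c < 1+\epsilon$, and comparing with $\psi(x)-\psi(x-h) = O(\log x)$ forces $h \le (c + o(1))\sqrt{x}\log x$, contradicting $h = (1+\epsilon)\sqrt{x}\log x$ for $\epsilon>0$ once $x$ is sufficiently large.

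The main obstacle is control of the tail: naive truncation of the zero-sum gives $O(\sqrt{x}(\log x)^2)$, and the entire purpose of the weighting in Theorem~\ref{one} is to damp this contribution to $o(\sqrt{x}\log x)$. The remaining work is to check that the same weighting, analysed asymptotically, passes the numerical constants used in Theorem~\ref{one} to their limiting values, replacing $4/\pi$ by a constant strictly less than $1+\epsilon$ and closing the argument.
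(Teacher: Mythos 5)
There is a genuine gap, and it sits exactly where you locate "the remaining work." Your premise is that the constant $4/\pi$ in Theorem~\ref{one} is an artefact of numerical inflations that vanish asymptotically; it is not. The Theorem~\ref{one} argument is already asymptotic in its main term: with the Fej\'er weight on $(x-h,x+h)$, the trivial pointwise bound on the head gives $\frac{1}{h}|\Sigma_1| \lesssim \frac{\alpha}{2\pi}\sqrt{x}\log x$ and the tail gives $\frac{1}{h}|\Sigma_2| \lesssim \frac{2}{\pi\alpha}\sqrt{x}\log x$, and optimising over the truncation parameter $\alpha$ yields exactly $2/\pi$ per unit of $h$, i.e.\ $4/\pi$ for the full interval $2h$ --- with no further savings available from taking $x$ large. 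Your bookkeeping (head $\sim \sqrt{x}\log x/\pi$ \emph{and} tail $o(\sqrt{x}\log x)$) is internally inconsistent: with the crossover fixed at $|\gamma|\asymp x/h$ the weighted tail is $\asymp \sqrt{x}\log x$, not $o(\sqrt{x}\log x)$, and pushing the crossover to $\alpha x/h$ with $\alpha\to\infty$ (which is what killing the tail requires) inflates the trivial head bound proportionally to $\alpha$. There is also a conflation in your setup: the formula you write down is the unweighted first-difference sum $\sum_\rho (x^\rho-(x-h)^\rho)/\rho$, whose tail is not even absolutely convergent under the pointwise bound $2\sqrt{x}/|\gamma|$ (since $\sum_\gamma \gamma^{-1}$ diverges); once you pass to the triangle weight to restore convergence, the detected interval doubles to $(x-h,x+h)$, so the constant you must beat per unit of $h$ is $1/2$, not $1$ --- strictly below the $2/\pi$ that the trivial bounds deliver.

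The missing idea, which is the whole content of the paper's proof, is a non-trivial treatment of the head: writing the second difference as
\begin{equation*}
(x+h)^{\rho+1}-2x^{\rho+1}+(x-h)^{\rho+1} \approx -4\,x^{\rho+1}\sin^2\!\Big(\frac{\gamma h}{2x}\Big)
\end{equation*}
so that the low zeros, for which $\gamma h/x$ is small, contribute with an extra damping factor $\sin^2(\gamma h/(2x))$ rather than being counted at full strength. Converting the resulting sum to an integral against $dN(T)$ gives
\begin{equation*}
\frac{1}{h}|\Sigma_1| \leq \Big(\frac{2}{\pi}\int_0^{\alpha/2}\frac{\sin^2 t}{t^2}\,dt\Big)\sqrt{x}\log(x/h) + O(\alpha\sqrt{x}),
\end{equation*}
which stays bounded as $\alpha\to\infty$ because $\int_0^\infty \sin^2 t\, t^{-2}\,dt=\pi/2$; one may then let $\alpha\to\infty$ to make the tail negligible while the head saturates at $(\tfrac12+o(1))\sqrt{x}\log x$ per unit of $h$. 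That yields $h=(\tfrac12+\epsilon)\sqrt{x}\log x$ and hence the interval length $(1+\epsilon)\sqrt{x}\log x$. Without this $\sin^2$ refinement (or an equivalent exploitation of cancellation among the low zeros), your plan cannot get below $4/\pi$.
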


It is not clear to the author whether the optimal constant is $1$ or something smaller. The reader may wish to see the work of Goldston and Heath-Brown \cite{goldstonheathbrown}, for they show that one has an arbitrarily small constant on some more sophisticated conjectures.

From our proof of Theorem \ref{implicit}, it follows readily that Theorem \ref{cramer} can be taken with $c = 3 + \epsilon$ for sufficiently large values of $x$. It is clear from the prime number theorem that $c>1$.

\section{Estimates on the Riemann hypothesis}

\subsection{A smooth explicit formula}

The purpose of this section is to prove Theorem \ref{one}. We define the von Mangoldt function as

\begin{displaymath}
   \Lambda(n) = \left\{
     \begin{array}{ll}
       \log p  & : \hspace{0.1in} n=p^m, \text{ $p$ is prime, $m \in \mathbb{N}$}\\
       0   & : \hspace{0.1in} \text{otherwise}
     \end{array}
   \right.
\end{displaymath} 
and introduce the sum $\psi(x) = \sum_{n \leq x} \Lambda(n)$. This summatory function submits itself to the Riemann von-Mangoldt explicit formula

\begin{equation} \label{explicitoriginal}
\psi(x) = x - \sum_{\rho} \frac{x^\rho}{\rho}-\log 2\pi - \frac{1}{2} \log(1-x^{-2})
\end{equation}
where $x>0$ is not an integer and the sum is over all nontrivial zeroes $\rho = \beta+i\gamma$ of the Riemann zeta-function. We define the weighted sum

$$\psi_1 (x) = \sum_{n \leq x} (x-n) \Lambda(n) = \int_2^x \psi(t) dt$$
and prove an analogous explicit formula.

\begin{lem} \label{first}
For $x>0$ and $x \notin \mathbb{Z}$ we have

\begin{equation} \label{explicit}
\psi_1(x) = \frac{x^2}{2} - \sum_{\rho} \frac{x^{\rho+1}}{\rho (\rho +1)} - x \log(2\pi) + \epsilon(x)
\end{equation}
where

$$|\epsilon(x)| < \frac{12}{5}.$$
\end{lem}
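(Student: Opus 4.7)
The plan is to derive (\ref{explicit}) by integrating the standard explicit formula (\ref{explicitoriginal}) from $2$ to $x$, using the fact that $\psi_1(2) = 0$ so that $\psi_1(x) = \int_2^x \psi(t)\,dt$. Because the sum over zeroes in (\ref{explicitoriginal}) is only conditionally convergent, I would first work with a truncated version (restricting to $|\mathrm{Im}\,\rho| \leq T$, with the standard error $O(x\log^2 x / T)$) and integrate, then let $T \to \infty$. On the Riemann hypothesis $|\rho(\rho+1)| \asymp \gamma^2$, so the integrated series $\sum_\rho x^{\rho+1}/(\rho(\rho+1))$ converges absolutely and the interchange of sum and integral passes to the limit cleanly. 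This justification is the main technical hurdle of the argument.

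Term-by-term integration gives $\int_2^x t\,dt = x^2/2 - 2$, $\int_2^x t^\rho/\rho\,dt = (x^{\rho+1}-2^{\rho+1})/(\rho(\rho+1))$, and $\int_2^x \log(2\pi)\,dt = (x-2)\log(2\pi)$. Collecting the $x$-dependent parts into the stated main terms of (\ref{explicit}) leaves
\[
\epsilon(x) = -2 + 2\log(2\pi) + \sum_\rho \frac{2^{\rho+1}}{\rho(\rho+1)} - \tfrac{1}{2}\int_2^x \log(1-t^{-2})\,dt,
\]
which is a bounded function of $x$ for $x \geq 2$.

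The remaining task is the bound $|\epsilon(x)| < 12/5$. The elementary pieces are immediate: $-2 + 2\log(2\pi) \approx 1.676$, and expanding $-\log(1-t^{-2}) = \sum_{k \geq 1} t^{-2k}/k$ gives $0 < -\tfrac{1}{2}\int_2^x \log(1-t^{-2})\,dt \leq \tfrac{1}{2}\sum_{k\geq 1} 1/(k(2k-1)2^{2k-1}) < 0.3$. For the sum over zeroes I would apply the trivial estimate $\bigl|\sum_\rho 2^{\rho+1}/(\rho(\rho+1))\bigr| \leq 2^{3/2}\sum_\rho 1/|\rho(\rho+1)|$ together with a standard explicit bound on $\sum_\rho 1/|\rho(\rho+1)|$ (which on RH is small, of order $0.05$); this contributes well under $0.2$. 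A sanity check via an alternative contour-integration derivation of the formula (shifting $-\frac{1}{2\pi i}\int_{(c)}(\zeta'(s)/\zeta(s))(x^{s+1}/(s(s+1)))\,ds$ to the left and collecting residues at $s = 1, 0, -1$, the nontrivial zeroes, and the trivial zeroes $s=-2k$) expresses $\epsilon(x)$ alternatively as $\zeta'(-1)/\zeta(-1) - \sum_{k \geq 1} x^{-2k+1}/(2k(2k-1))$, with $\zeta'(-1)/\zeta(-1) \approx 1.985$; the two expressions agree and both give $|\epsilon(x)| < 2 < 12/5$. Once past the convergence justification, everything reduces to routine bookkeeping with explicit constants.
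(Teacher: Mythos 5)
Your proposal follows essentially the same route as the paper: integrate the explicit formula for $\psi(t)$ term by term over $(2,x)$, collect the constants $-2+2\log(2\pi)$, the sum $\sum_\rho 2^{\rho+1}/(\rho(\rho+1))$ (bounded on RH via $2^{3/2}\sum_\rho |\rho|^{-2}$, whose value is known explicitly), and the integral of $\tfrac12\log(1-t^{-2})$ (which the paper evaluates as $\log(16/27)$), arriving at the same numerical bound. Your added care about justifying the term-by-term integration via truncation, and the contour-integration cross-check, are sound refinements of what the paper leaves implicit, but the argument is the same.
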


\begin{proof}
We integrate both sides of (\ref{explicitoriginal}) over the interval $(2,x)$ to get

$$\psi_1(x) = \frac{x^2}{2} - \sum_{\rho} \frac{x^{\rho+1}}{\rho (\rho +1)} - x \log(2\pi) + \epsilon(x)$$
where

$$|\epsilon(x)| < 2 + \bigg| \sum_{\rho} \frac{2^{\rho+1}}{\rho (\rho+1)} \bigg| + \frac{1}{2} \bigg| \int_2^x \log( 1- t^{-2} ) dt\bigg|.$$

The integral can be evaluated to yield $\log(16/27)$, and the sum over the zeroes can be estimated on the Riemann hypothesis by

\begin{eqnarray*}
\bigg| \sum_{\rho} \frac{2^{\rho+1}}{\rho (\rho+1)} \bigg| & < & 2^{3/2}  \sum_{\rho} \frac{1}{| \rho|^2} 
\end{eqnarray*}
where the value of this sum is explicitly known (see, for example, Davenport \cite{davenport}). The result follows.

\end{proof}

We now consider the existence of prime numbers in an interval of the form $(x-h, x+h)$. We do this by defining the weight function

\begin{displaymath}
   w(n) = \left\{
     \begin{array}{ll}
      1 - |n - x|/h & : \hspace{0.1in}  x-h < n < x+h\\
       0   & : \hspace{0.1in} \text{otherwise.}
     \end{array}
   \right.
\end{displaymath} 
and considering the identity

\begin{eqnarray} \label{weighted}
\sum_{n} \Lambda(n) w(n) & = & \frac{1}{h} \Big(\psi_1(x+h) - 2 \psi_1(x) + \psi_1 (x-h)\Big).
\end{eqnarray}
One can verify this by expanding the weight sum on the left hand side. An application of Lemma \ref{first} to the above equation gives us the following.

\begin{lem} \label{dog}
Let $x>0$ and $h>0$. Then

$$\sum_{n} \Lambda(n) w(n) = h - \frac{1}{h} \Sigma + \epsilon(h)$$
where 

$$\Sigma = \sum_{\rho} \frac{ (x+h)^{\rho+1} - 2x^{\rho+1} +(x-h)^{\rho+1}}{\rho(\rho+1)}$$
and 

$$|\epsilon(h)| < \frac{48}{5 h}.$$

\end{lem}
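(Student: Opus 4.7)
The plan is to apply Lemma \ref{first} three times---at the points $x+h$, $x$, and $x-h$---and substitute into the identity (\ref{weighted}). The computation is essentially bookkeeping once we check that the formula of Lemma \ref{first} applies at these three arguments, which is the only subtlety to watch.

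Specifically, I first note that $\psi_1(y) = \int_2^y \psi(t)\,dt$ is continuous in $y$, and the right-hand side of (\ref{explicit}) is also continuous in $y$; therefore the explicit formula for $\psi_1$ extends from $y \notin \mathbb{Z}$ to all $y > 0$ by continuity. This lets me freely evaluate at $x+h$, $x$, and $x-h$ regardless of whether these are integers. Substituting into (\ref{weighted}) gives
$$\sum_n \Lambda(n) w(n) = \frac{1}{h}\bigg(\frac{(x+h)^2 - 2x^2 + (x-h)^2}{2} - \Sigma - \log(2\pi)\big((x+h) - 2x + (x-h)\big) + E(x,h)\bigg),$$
where $E(x,h) = \epsilon(x+h) - 2\epsilon(x) + \epsilon(x-h)$ and $\Sigma$ is exactly the sum over zeros appearing in the statement.

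Next I simplify the main terms: the quadratic combination yields $(x+h)^2 - 2x^2 + (x-h)^2 = 2h^2$, so the first piece contributes $h^2/h = h$, which is the leading term of the conclusion. The linear-in-$y$ terms cancel exactly since $(x+h) - 2x + (x-h) = 0$, so the $\log(2\pi)$ contribution vanishes. The zero-sum contribution is $-\Sigma/h$, as desired. Finally, the triangle inequality combined with the bound $|\epsilon(y)| < 12/5$ from Lemma \ref{first} gives
$$|E(x,h)| \leq |\epsilon(x+h)| + 2|\epsilon(x)| + |\epsilon(x-h)| < 4 \cdot \frac{12}{5} = \frac{48}{5},$$
so after division by $h$ the error $\epsilon(h) := E(x,h)/h$ satisfies $|\epsilon(h)| < 48/(5h)$, completing the proof.

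There is no real obstacle here---the main point requiring care is the continuity argument needed to justify applying Lemma \ref{first} at points that may be integers, since the sum $\sum_n \Lambda(n) w(n)$ is genuinely a sum over the neighborhood of $x$ and there is no a priori reason $x \pm h \notin \mathbb{Z}$.
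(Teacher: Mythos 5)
Your proof is correct and follows essentially the same route as the paper, which simply states that Lemma \ref{dog} follows by applying Lemma \ref{first} to the identity (\ref{weighted}); your computation of the main term $2h^2/(2h)=h$, the cancellation of the $\log(2\pi)$ terms, and the error bound $4\cdot\tfrac{12}{5}=\tfrac{48}{5}$ is exactly the intended bookkeeping. Your continuity argument for extending the explicit formula to integer arguments is a detail the paper leaves implicit, and it is a worthwhile addition since $x\pm h$ need not avoid the integers.
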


We use this lemma to prove our results. Our concern is for estimating the sum $\Sigma$, which we consider in two parts:

$$\Sigma = \Sigma_1 + \Sigma_2.$$
Here, $\Sigma_1$ ranges over the zeroes $\rho$ with $|\gamma| < \alpha x / h$, where $\alpha>0$ is to be chosen later, and $\Sigma_2$ is the contribution from the remaining zeroes.

\subsection{Proof of Theorem \ref{one}}

 For $\Sigma_1$, we notice that the summand may be written as

$$\int_{x-h}^{x+h} (h-|x-u|) u^{\rho-1} du,$$
the absolute value of which can be bounded above by

$$\frac{1}{\sqrt{x-h}} \int_{x-h}^{x+h} (h-|x-u|) du = \frac{h^2}{\sqrt{x-h}} .$$
It follows that

\begin{eqnarray*}
\Sigma_1 & \leq & \frac{h^2}{\sqrt{x-h}} \sum_{ |\gamma| < \alpha x / h}  1 \\
& = & \frac{2h^2}{\sqrt{x-h}} N(\alpha x/h)
\end{eqnarray*}
where $N(T)$ denotes the number of zeroes $\rho$ with $0 < \beta < 1$ and $0 < \gamma < T$. By Corollary 1 of Trudgian \cite{trudgianargument}, we have the bound

\begin{equation} \label{boundcount}
N(T) < \frac{T \log T}{2 \pi}
\end{equation}
for all $T>15$, and so

\begin{equation} \label{sigma1}
| \Sigma_1 | < \frac{\alpha x h}{\pi \sqrt{x-h}} \log(\alpha x/h)
\end{equation}
when $\alpha x/h > 15$. We can estimate $\Sigma_2$ trivially on the Riemann hypothesis by

\begin{eqnarray*}
|\Sigma_2| & < & 4 (x+h)^{3/2} \sum_{|\gamma| > \alpha x/h} \frac{1}{\gamma^2} \\
& = & 8 (x+h)^{3/2} \sum_{\gamma > \alpha x/h} \frac{1}{\gamma^2} \\
& < & \frac{4 h (x+h)^{3/2}}{\pi \alpha x} \log(\alpha x/h),
\end{eqnarray*}
where the last line follows from Lemma 1 (ii) of Skewes \cite{skewes}. Putting our estimates for $\Sigma_1$ and $\Sigma_2$ into Lemma \ref{dog} we have

\begin{eqnarray*}
\sum_{n} \Lambda(n) w(n) & > & h - \frac{1}{h} ( |\Sigma_1| + |\Sigma_2|) - \frac{48}{5h} \\
& = & h - \bigg(\frac{\alpha x}{\pi \sqrt{x-h}} +\frac{4 (x+h)^{3/2}}{\pi \alpha x} \bigg) \log(\alpha x/h) - \frac{48}{5h}.
\end{eqnarray*}
Notice that as we will choose $h$ to be $o(x)$, it follows that the term in front of the $\log$ is asymptotic to

$$ \Big(\frac{\alpha}{\pi} + \frac{4}{\pi \alpha} \Big) \sqrt{x}$$
It is a straightforward exercise in differential calculus to show that $\alpha = 2$ will minimise this term, and thus

\begin{eqnarray*}
\sum_{n} \Lambda(n) w(n) & > &  h - \frac{ 2}{\pi} \bigg(\frac{ x}{ \sqrt{x-h}} +\frac{ (x+h)^{3/2}}{ x} \bigg) \log(2 x/h) - \frac{48}{5h},
\end{eqnarray*}
or rather

\begin{eqnarray*} 
\psi(x+h) - \psi(x-h) & = & \sum_{x-h < n \leq x+h} \Lambda(n) \\
& > &  h - \frac{ 2}{\pi} \bigg(\frac{ x}{ \sqrt{x-h}} +\frac{ (x+h)^{3/2}}{ x} \bigg) \log(2 x/h) - \frac{48}{5h}.
\end{eqnarray*}

The sum on the left hand side of the above inequality is over prime powers. As such we consider the Chebyshev $\theta$-function given by

$$\theta(x) = \sum_{p \leq x} \log p.$$
Here we can use Theorem 14 and equation (5.5) of Schoenfeld \cite{schoenfeld} to get that

$$0.98 \sqrt{x} < \psi(x) - \theta(x) < 1.11 \sqrt{x} + 3 x^{1/3}$$
for all $x \geq 121$. We use this bound with our inequality for $\psi(x+h) - \psi(x-h)$ to get

\begin{eqnarray*}
\sum_{x - h < p \leq x + h} \log p & > &  h - \frac{ 2}{\pi} \bigg(\frac{ x}{ \sqrt{x-h}} +\frac{ (x+h)^{3/2}}{ x} \bigg) \log(2 x/h) \\
& & -1.11 \sqrt{x+h} - 3 (x+h)^{1/3} +0.98\sqrt{x-h} - \frac{48}{5h} .
\end{eqnarray*}
for this range of values. If we set $h = d \sqrt{x} \log x$, the leading term on the right hand side can be shown to be asymptotic to

\begin{equation} \label{asym}
\Big( d - \frac{2}{\pi} \Big) \sqrt{x} \log x + \frac{4}{\pi} \sqrt{x} \log \log x.
\end{equation}
Thus, for $d \geq 2/\pi$ we have that there is a prime in the interval

$$(x-d \sqrt{x}\log x, x+ d \sqrt{x} \log x]$$
and so we choose $d = 2/ \pi$. Then, using a monotonicity argument we have this for all $x \geq 65000$. Replacing $x + d \sqrt{x} \log x$ with $x$, we have that there is a prime in the interval

$$(x - 2 d \sqrt{x} \log x, x]$$
for all 

$$x \geq 65000 + \frac{2}{\pi} \sqrt{65000} \log(65000) \approx 66798.7$$
where $2d = 4/\pi$. This completes the proof of Theorem 2, for one can use \textsc{Mathematica} to verify the theorem for the remaining values of $x$.

\subsection{Proof of Theorem \ref{implicit}}

In what follows we show that the constant $4/\pi$ can be reduced to essentially 1 by a more detailed analysis of the sum $\Sigma_1$. Bounding trivially, we have that

$$|\Sigma_1| \leq x^{3/2} \sum_{|\gamma| < \alpha x/h} \frac{|(1+h/x)^{3/2} e^{i \gamma \log(1+h/x)} + (1-h/x)^{3/2} e^{i \gamma \log(1-h/x)} - 2|}{\gamma^2}.$$
By noting the straightforward bound

$$\log(1 \pm h/x) = \pm\frac{h}{x} + O\bigg(\frac{h^2}{x^2} \bigg)$$
which holds for $h = o(x)$, we have that

\begin{eqnarray*}
e^{i \gamma \log (1 \pm h/x)} & = & e^{\pm i \gamma h/x} \Big( 1 + O \Big( \gamma \frac{h^2}{x^2} \Big) \Big) \\
& = & e^{\pm i \gamma h/x} + O(\alpha h/x).
\end{eqnarray*}
as $|\gamma| < \alpha x/h$. Using this estimate and 

$$(1 \pm h/x)^{3/2} = 1 + O(h/x)$$
one obtains 

\begin{equation*}
|\Sigma_1| \leq 8 x^{3/2} \sum_{0<\gamma < \alpha x/h} \frac{ \sin^2 (\frac{h \gamma}{2 x}) }{\gamma^2} + O(\alpha  h \sqrt{x}).
\end{equation*}
This sum can be estimated using Theorem A from Ingham \cite{inghambook} and equation (\ref{boundcount}) to get that

$$|\Sigma_1| \leq \frac{4 x^{3/2}}{\pi}  \int_{\gamma_1}^{\alpha x/h} \frac{ \log(u) \sin^2 (\frac{hu}{2x})}{u^2}du + O(\alpha  h \sqrt{x})$$
where $\gamma_1 = 14.1347\ldots$ denotes the least positive ordinate of a zero. Employing the substitution $u =2xt/h$ and simplifying gives us that

$$| \Sigma_1 | \leq \bigg( \frac{2 }{\pi} \int_0^{\alpha/2} \frac{\sin^2 t}{t^2} dt \bigg) h \sqrt{x} \log(x/h)  + O(\alpha  h \sqrt{x}).$$ 

Now, estimating $\Sigma_2$ as in the previous section, we have from Lemma \ref{dog} and the above estimate for $\Sigma_1$ that

$$\sum_{x-h < n < x+h} \Lambda(n) \geq h - \Big( \frac{4}{\pi \alpha} + \frac{2}{\pi} \int_0^{\alpha/2} \frac{\sin^2 t}{t^2} dt\Big) \sqrt{x} \log(x/h)+O(\alpha \sqrt{x}).$$
If we set $h = c \sqrt{x} \log x$, and choose

$$c >  \frac{2}{\pi \alpha} + \frac{1}{\pi} \int_0^{\alpha/2} \frac{\sin^2 t}{t^2} dt,$$
then it follows that

$$\sum_{x-h < n < x+h} \Lambda(n) \gg \sqrt{x} \log x.$$
We note that we have $c = 1/2 + \epsilon$ provided that we take $\alpha$ to be sufficiently large. One can also remove the contribution of prime powers to the sum to have that there is a prime in the interval

$$(x-(1/2+\epsilon) \sqrt{x} \log x, x+(1/2+\epsilon) \sqrt{x} \log x)$$
for all sufficiently large values of $x$. This effectively completes the proof of Theorem \ref{implicit}. 

\subsection{A constant for Cram\'{e}r's theorem}

As mentioned in the introduction, one can also show that Theorem \ref{cramer} can be taken with $c=3+\epsilon$ provided that $x$ is sufficiently large. For if we take

$$c = 1+  \frac{2}{\pi \alpha} + \frac{1}{\pi} \int_0^{\alpha/2} \frac{\sin^2 t}{t^2} dt = \frac{3}{2} + \epsilon$$
then we have, again removing the contribution from prime powers, that

$$\sum_{x-h < p < x+h} \log p \geq \sqrt{x} \log x + O(\sqrt{x} \log \log x).$$
It remains to estimate by

\begin{eqnarray*}
\pi(x+h) - \pi(x-h) & > & \frac{1}{\log(x+h)} \sum_{x-h < p \leq x+h} \log p \\
& > & \sqrt{x} + O\bigg( \frac{\sqrt{x} \log \log x}{\log x} \bigg)
\end{eqnarray*}
and the result follows.

\section*{Acknowledgements}

The author would like to thank the referee for their feedback. In particular, for pointing out that the constant $4/\pi$ in Theorem \ref{one} could be replaced by $(1+\epsilon)$ through a more considered analysis.

\clearpage

\bibliographystyle{plain}

\bibliography{biblio}

\end{document}